\def\newaliasedtheorem#1[#2]#3{
	\newaliascnt{#1@alt}{#2}
	\newtheorem{#1}[#1@alt]{#3}
	\expandafter\newcommand\csname #1@altname\endcsname{#3}
}
\theoremstyle{plain}
\newtheorem{theorem}{Theorem}[section]
\theoremstyle{definition}
\theoremstyle{remark}
\newcommand{\R}{\mathbb{R}}
\newcommand{\C}{\mathbb{C}}
\let\altphi\phi
\let\phi\varphi
\let\varphi\altphi
\let\altphi\undefined
\newcommand{\average}{{\mathchoice {\kern1ex\vcenter{\hrule height.4pt
width 6pt
depth0pt} \kern-9.7pt} {\kern1ex\vcenter{\hrule height.4pt width 4.3pt
depth0pt}
\kern-7pt} {} {} }}
\address{\textsc{Daniela Di Donato}: 
Dipartimento di Ingegneria Industriale e Scienze Matematiche, Via Brecce Bianche, 12 60131 Ancona, Universit\'a Politecnica delle Marche.}
\email{daniela.didonato@unitn.it}
\title{A note about Intrinsically Lipschitz constants}
\date{\today}
\author{ Daniela Di Donato}
\begin{document}

\begin{abstract}
		Recently, Le Donne and the author introduce the notion of  intrinsically Lipschitz sections in metric spaces. The idea of this paper is to investigate about the properties of the intrinsically Lipschitz constants.	More precisely, we give the Leibniz formula  and the product formula for the intrinsic slope.
\end{abstract}

\maketitle 
\tableofcontents

\section{Introduction} 
In \cite{DDLD21}, Le Donne and the author introduce a new concept of Lipschitz graphs in general metric spaces. Our start point is to consider intrinsically Lipschitz maps introduced by Franchi, Serapioni and Serra Cassano \cite{FSSC, FSSC03, MR2032504} (see also  \cite{SC16, FS16}) in subRiemannian Carnot groups \cite{ABB, BLU, CDPT}. This concept was born in order to give a good definition of rectifiability in subRiemannian geometry after the negative result shown in \cite{AmbrosioKirchheimRect}. The notion of rectifiable set is a key one in Calculus of Variations and in Geometric Measure Theory.

In \cite{DDLD21} there are relevant results like Ahlfors-David regularity, the Ascoli-Arzel\'a Theorem, the Extension Theorem for the so-called intrinsically Lipschitz sections. Our approach is to consider graph theory instead of map one. Yet, in  \cite{D22.1, D22.2, D22.4} the author introduced and studied other natural definitions  in metric spaces: the  intrinsically H\"older, quasi-isometric  and quasi-symmetric sections, respectively.

In this note we focus our attention on the intrinsically Lipschitz constants. The long-term objective is to adapt Cheeger theory \cite{C99} in our intrinsic context. Following  \cite{DM}, we prove the Leibniz formula (see Proposition \ref{propLeibnitz formula for slope}) and the product formula (see Proposition \ref{propslope.9apr}) for intrinsic slope of Lipschitz sections.

We begin recall the definition of intrinsically Lipschitz sections given in \cite{DDLD21}. Our setting is the following. We have a metric space $X$, a topological space $Y$, and a 
quotient map $\pi:X\to Y$, meaning
continuous, open, and surjective.
The standard example for us is when $X$ is a metric Lie group $G$ (meaning that the Lie group $G$ is equipped with a left-invariant distance that induces the manifold topology), for example a subRiemannian Carnot group, 
and $Y$ if the space of left cosets $G/H$, where 
$H<G$ is a  closed subgroup and $\pi:G\to G/H$ is the projection modulo $H$, $g\mapsto gH$.

\begin{defi}[Intrinsic Lipschitz section]\label{Intrinsic Lipschitz section}
Let $(X,d)$ be a metric space and let $Y$ be a topological space. We say that a map $\phi :Y \to X$ is a {\em section} of a quotient map $\pi :X \to Y$ if
\begin{equation*}
\pi \circ \phi =\mbox{id}_Y.
\end{equation*}
Moreover, we say that $\phi$ is an {\em intrinsically Lipschitz section} with constant $L\geq 1$ if in addition
\begin{equation*}
d(\phi (y_1), \phi (y_2)) \leq L d(\phi (y_1), \pi ^{-1} (y_2)), \quad \mbox{for all } y_1, y_2 \in Y.
\end{equation*}

Equivalently, we are requesting that  that
\begin{equation*}
d(x_1, x_2) \leq L d(x_1, \pi ^{-1} (\pi (x_2))), \quad \mbox{for all } x_1,x_2 \in \phi (Y) .
\end{equation*}
\end{defi}

We underline that, in the case  $
 \pi$ is a Lipschitz quotient or submetry \cite{MR1736929, Berestovski}, the results trivialize, since in this case being intrinsically Lipschitz  is equivalent to biLipschitz embedding, see Proposition 2.4 in \cite{DDLD21}. 

The rest of the paper is organized as follows. In $\mathbf{Section \, 2}$ we recall some basic definition of intrinsically Lipschitz section in order to show that a suitable subset (see Definition \ref{defwrtpsinew.9apr}) is a convex set (see Corollary \ref{corollIMPO}).   In $\mathbf{Section \, 3}$ we provide some basic properties of the intrinsic Lipschitz constants like the Leibniz formula (see Proposition \ref{propLeibnitz formula for slope}) and the product formula (see Proposition \ref{propslope.9apr}).   
$\mathbf{Section \, 4}$ states maximum, minimum and inverse of intrinsic Lipschitz sections are so too.
In $\mathbf{Section \, 5},$ we investigate when the class of intrinsically Lipschitz sections is a convex set and we give a "stronger" version of  the Leibniz and the product formula 	with additional hypothesis.

\section{Intrinsic Lipschitz set with respect to another one}
In \cite{DDLD21}, we introduce the notion of intrinsically Lipschitz with respect to another intrinsically Lipschitz section as follows.

%

\begin{defi}[Intrinsic Lipschitz  with respect to  a section]\label{defwrtpsinew}
 Given  sections 
  $\phi, \psi :Y\to X$   
  of $\pi$. We say that   $\phi $ is {\em intrinsically $L$-Lipschitz with respect to  $\psi$ at point $\hat x$}, with $L\geq1$ and $\hat x\in X$, if
\begin{enumerate}
\item $\hat x\in \psi(Y)\cap \phi (Y);$
\item $\phi  (Y) \cap  C_{\hat x,L}^{\psi} = \emptyset ,$
\end{enumerate}
where
$$ C_{\hat x,L}^{\psi} := \{x\in X \,:\,  d(x, \psi (\pi (x))) > L d(\hat x, \psi (\pi (x)))  \}.   $$
\end{defi}

  \begin{rem} Definition~\ref{defwrtpsinew} can be rephrased as follows.
 A section $\phi  $ is intrinsically $L$-Lipschitz with respect to  $\psi$ at point $\hat x$ if
 and only if 
 there is $\hat y\in Y$ such that  $\hat x= \phi (\hat y)=\psi(\hat y)$ and
\begin{equation}\label{defintrlipnuova}
 d(x, \psi (\pi (x))) \leq L d(\hat x, \psi (\pi (\hat x))), \quad \forall x \in \phi (Y), 
\end{equation}
which equivalently means 
\begin{equation}\label{equation28.0}
 d(\phi (y), \psi (y)) \leq L d(\psi(\hat y), \psi (y)) ,\qquad \forall y\in Y. 
\end{equation}
  \end{rem}  

  \begin{rem} We stress that Definition~\ref{defwrtpsinew} does not induce  an equivalence relation, because of lack of symmetry in the right-hand side of \eqref{equation28.0}. On the other hand, in  \cite[Theorem 4.2]{D22.1} the author introduce a stronger condition in order to obtain an equivalence relation. 
  \end{rem}

  Now we are able to define the key notion of this paper.
    \begin{defi}[Intrinsic Lipschitz set with respect to $\psi$]\label{defwrtpsinew.9apr} Let  $\psi: Y \to X$ a section of $\pi$.  We define the set  of all  intrinsically $L$-Lipschitz section of $\pi$ with respect to  $\psi$ at point $\hat x$ as
\begin{equation*}
\begin{aligned}
ILS _{\psi , \hat x} & :=\{ \phi :Y\to X \mbox{ section of $\pi$ } :\, \phi \mbox{ is intrinsically $\tilde L$-Lipschitz w.r.t. $\psi$ at point $\hat x$ for}\\
& \quad \quad \mbox{ some $\tilde L\geq 1$} \}.
\end{aligned}
\end{equation*}
  \end{defi}
  
An interesting observation is that, considering $ILS _{\psi , \hat x},$ the intrinsic Lipschitz constant $L$ can be change but it is fundamental that the point $\hat x$ is a common one for the every sections. Regarding  $ILS _{\psi , \hat x},$  in \cite[Theorem 3.5]{D22.1} we have the following result.

   \begin{theorem}\label{theorem} Let $\pi :X \to Y$ is a linear and quotient map from a normed space $X$ to a metric space $Y.$ Assume also that $\psi :Y \to X$ is a section of $\pi$ with $L \geq 1$  and $\{\lambda \hat x\, :\, \lambda \in \R ^+\} \subset X$ with $\hat x \in \psi (Y).$
   
Then, the set $\bigcup _{\lambda \in \R ^+} ILS _{\lambda \psi , \lambda \hat x} \cup \{ 0 \}$ is a vector space over $\R $ or $\C .$
       \end{theorem}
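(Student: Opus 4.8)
The plan is to realize $V:=\bigcup_{\lambda\in\R^+}ILS_{\lambda\psi,\lambda\hat x}\cup\{0\}$ as a linear subspace of the $\R$- (resp.\ $\C$-) vector space $\mathrm{Map}(Y,X)$ of all maps $Y\to X$ with pointwise operations; since $0\in V$ by construction, it suffices to verify closure under addition and under scalar multiplication. First I would fix $\hat y_0:=\pi(\hat x)$, so that $\psi(\hat y_0)=\hat x$, and rewrite membership using \eqref{equation28.0}. Because the distance on $X$ is induced by a norm, one has $d\big((\lambda\psi)(\hat y_0),(\lambda\psi)(y)\big)=|\lambda|\,d(\psi(\hat y_0),\psi(y))$, so $\phi\in ILS_{\lambda\psi,\lambda\hat x}$ is equivalent to the two conditions $\phi(\hat y_0)=\lambda\hat x$ and $\|\phi(y)-\lambda\psi(y)\|\le \tilde L\,|\lambda|\,d(\psi(\hat y_0),\psi(y))$ for all $y\in Y$ and some $\tilde L\ge 1$. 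This normalized description, together with the linearity of $\pi$ (which keeps the base points $\lambda\hat x$ consistent), is what makes the operations transparent.

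Next I would carry out the two core estimates. For scalar multiplication by a nonzero $c$, absolute homogeneity of the norm gives $\|c\phi(y)-c\lambda\psi(y)\|=|c|\,\|\phi(y)-\lambda\psi(y)\|\le \tilde L\,|c\lambda|\,d(\psi(\hat y_0),\psi(y))$ and $c\phi(\hat y_0)=c\lambda\hat x$, so $c\phi\in ILS_{c\lambda\psi,c\lambda\hat x}$ with the \emph{same} constant $\tilde L\ge1$. For addition, the triangle inequality yields
\[
\|(\phi_1+\phi_2)(y)-(\lambda_1+\lambda_2)\psi(y)\|\le \big(\tilde L_1|\lambda_1|+\tilde L_2|\lambda_2|\big)\,d(\psi(\hat y_0),\psi(y)),
\]
together with $(\phi_1+\phi_2)(\hat y_0)=(\lambda_1+\lambda_2)\hat x$; dividing by $|\lambda_1+\lambda_2|$, the resulting constant $(\tilde L_1|\lambda_1|+\tilde L_2|\lambda_2|)/|\lambda_1+\lambda_2|$ is a weighted mean of numbers $\ge1$, hence $\ge1$, so $\phi_1+\phi_2\in ILS_{(\lambda_1+\lambda_2)\psi,(\lambda_1+\lambda_2)\hat x}$. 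These two computations dovetail with the convexity of each class recorded in \ref{corollIMPO}: positive homogeneity between classes plus convexity inside a class exhibits $V$ as a convex cone, and the content of the theorem is that this cone is in fact a subspace.

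The main obstacle I anticipate is not the norm estimates but the bookkeeping of the base point $\phi(\hat y_0)=\lambda\hat x$ under the full scalar field. For $c<0$ (and, in the complex case, for an arbitrary phase) the scalar-multiplication computation lands $c\phi$ in $ILS_{c\lambda\psi,c\lambda\hat x}$, whose base point $c\lambda\hat x$ leaves the positive ray $\{\lambda\hat x:\lambda\in\R^+\}$; reconciling this with a union indexed by $\R^+$ forces one to read the indexing over the whole line (resp.\ complex line) through $\hat x$, and this is exactly where the standing hypothesis $\{\lambda\hat x\}\subset X$ together with $\hat x\in\psi(Y)$ enters. The genuinely delicate point is the degenerate direction $\lambda_1+\lambda_2=0$ in the addition step, where the target class $ILS_{0\cdot\psi,0}$ degenerates and the displayed estimate becomes vacuous: here one must show directly that $\phi_1+\phi_2$ still lies in $V$, and this is precisely the role of the explicit adjunction of $\{0\}$. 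I would isolate this cancellation case as a separate lemma and expect it, rather than any of the routine inequalities, to carry the real weight of the argument.
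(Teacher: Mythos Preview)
The paper does not actually prove this theorem; it is quoted from \cite[Theorem~3.5]{D22.1} and no argument is given here, so there is no in-paper proof to compare against. Your strategy---verify the subspace axioms directly via the reformulation \eqref{equation28.0} together with absolute homogeneity of the norm---is the natural one, and the displayed inequalities you obtain for scalar multiples and for sums are correct as norm estimates.

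There is, however, a structural gap more basic than the sign and cancellation issues you already flag. By Definition~\ref{defwrtpsinew.9apr} every element of $ILS_{\lambda\psi,\lambda\hat x}$ is required to be a \emph{section of $\pi$}, that is $\pi\circ\phi=\mathrm{id}_Y$, and Definition~\ref{defwrtpsinew} likewise requires $\lambda\psi$ itself to be a section. With $\pi$ linear one has $\pi\circ(c\phi)=c\,(\pi\circ\phi)$ and $\pi\circ(\phi_1+\phi_2)=(\pi\circ\phi_1)+(\pi\circ\phi_2)$, so neither $c\phi$ (for $c\neq 1$) nor $\phi_1+\phi_2$ is in general a section of $\pi$; the same objection applies to $\lambda\psi$ for $\lambda\neq 1$. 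Thus your norm inequalities, while correct, do not by themselves place $c\phi$ or $\phi_1+\phi_2$ in any $ILS_{\mu\psi,\mu\hat x}$ as these sets are defined in the present paper. The statement as transcribed here cannot be established along your lines without first clarifying---from the source \cite{D22.1}---how the section requirement is meant to interact with the scaling by $\lambda$; most likely the intended reading relaxes ``section of $\pi$'' in a $\lambda$-dependent way, and once that is made precise your computations (including the borderline cases $c<0$ and $\lambda_1+\lambda_2=0$ that you correctly isolate) should go through.
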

      
Notice that  it is no possible to obtain the statement for $ ILS _{\psi , \hat x}$ since the simply observation that if $\psi (\hat y) = \hat x$ then $\psi (\hat y ) + \psi (\hat y )  \ne \hat x.$ On the other hand, in our idea $\pi$ is the 'usual' projection map and so it is not too restrictive to ask its linearity.  We conclude the introduction given a link between the intrinsically Lipschitz sections and the intrinsic sections with respect to another one.
 \begin{prop}\cite[Proposition 1.5]{DDLD21}\label{linkintrinsicocneelip}
   Let $X  $ be a metric space, $Y$ a topological space, $\pi :X \to Y$   a  quotient map, and $L\geq 1$.
Assume that   every point $x\in X$ is contained in the image of an intrinsic $L$-Lipschitz section $\psi_x$ for $\pi$.
 Then for every section $\phi :Y\to X$ of $\pi$ the following are equivalent:
   \begin{enumerate}
\item for all $x\in\phi(Y)$ the section $\phi $ is intrinsically $L_1$-Lipschitz with respect to  $\psi_x$ at   $x;$
\item  the section $\phi $  is intrinsically $L_2$-Lipschitz.
\end{enumerate}
   \end{prop}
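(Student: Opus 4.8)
The plan is to prove the equivalence (1) $\Leftrightarrow$ (2) by unwinding both definitions and exploiting the covering hypothesis that every point lies on some intrinsic $L$-Lipschitz section. Throughout I would work with the second, equivalent form of the intrinsic Lipschitz condition in Definition~\ref{Intrinsic Lipschitz section}, namely $d(x_1,x_2)\le L\,d(x_1,\pi^{-1}(\pi(x_2)))$ for $x_1,x_2\in\phi(Y)$, and with the reformulation \eqref{defintrlipnuova} of the relative condition. The key elementary fact I would isolate first is that, since $\psi_x$ is itself an intrinsic $L$-Lipschitz section passing through $x$ (so $\psi_x(\pi(x))=x$), the quantity $d(z,\psi_x(\pi(z)))$ is comparable to $d(z,\pi^{-1}(\pi(z)))$ up to the factor $L$. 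Indeed $\psi_x(\pi(z))\in\pi^{-1}(\pi(z))$ gives one inequality for free, and the intrinsic Lipschitz property of $\psi_x$ applied between $x$ and the point of $\psi_x(Y)$ over $\pi(z)$ gives the reverse comparison. This comparison is what converts distances to the fibre $\pi^{-1}(\pi(z))$ (which control the absolute condition (2)) into distances to the section $\psi_x$ (which control the relative condition (1)).

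For the implication (2) $\Rightarrow$ (1), I would fix $x\in\phi(Y)$ and let $\psi_x$ be the given $L$-Lipschitz section through $x$. I need to verify that $\phi$ is intrinsically $L_1$-Lipschitz with respect to $\psi_x$ at $x$, i.e. both that $x\in\psi_x(Y)\cap\phi(Y)$ and that \eqref{defintrlipnuova} holds. Membership is immediate because $\psi_x$ passes through $x$ and $x\in\phi(Y)$ by assumption. For the inequality, take an arbitrary $w\in\phi(Y)$; I estimate $d(w,\psi_x(\pi(w)))$ by first passing to the fibre distance $d(w,\pi^{-1}(\pi(w)))$ using the comparison above, then applying the intrinsic $L_2$-Lipschitz property of $\phi$ to the pair $w,x\in\phi(Y)$, and finally noting that the right-hand side of \eqref{defintrlipnuova} is exactly $d(x,\pi^{-1}(\pi(x)))=0$ handled correctly since $x=\psi_x(\pi(x))$. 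Chasing the constants yields an admissible $L_1$ depending only on $L$ and $L_2$.

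For the converse (1) $\Rightarrow$ (2), I would take two arbitrary points $x_1,x_2\in\phi(Y)$ and aim to bound $d(x_1,x_2)$ by $L_2\,d(x_1,\pi^{-1}(\pi(x_2)))$. The natural move is to apply hypothesis (1) at the base point $x=x_2$, using its section $\psi_{x_2}$: since $\phi$ is intrinsically $L_1$-Lipschitz with respect to $\psi_{x_2}$ at $x_2$, inequality \eqref{defintrlipnuova} controls $d(x_1,\psi_{x_2}(\pi(x_1)))$. I would then use the triangle inequality together with the comparison between $\psi_{x_2}$-distances and fibre distances to return to $d(x_1,\pi^{-1}(\pi(x_2)))$, and again track the constants to produce an admissible $L_2$.

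The main obstacle I anticipate is not any single inequality but the bookkeeping of the constants and the correct use of the covering hypothesis: one must be careful that the auxiliary section $\psi_x$ is chosen with base point matching the point at which the relative condition is tested, and that the comparison lemma between "distance to the section $\psi_x$" and "distance to the fibre" is applied in the right direction in each of the two implications. Getting the quantitative dependence $L_1=L_1(L,L_2)$ and $L_2=L_2(L,L_1)$ explicit, and checking that no hidden assumption (such as completeness or properness of $X$) is needed beyond the stated covering property, is where the care is concentrated.
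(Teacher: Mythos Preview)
First, note that the paper does not actually prove this proposition: it is quoted verbatim from \cite{DDLD21} and is only \emph{used} here (in the proof of Proposition~\ref{theorem2} and in Corollary~\ref{corollIMPO}). So there is no ``paper's own proof'' to compare against; I can only assess your outline on its own merits.

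Your overall architecture---triangle inequality plus the intrinsic Lipschitz property of $\psi_x$ to pass between ``distance to the fibre'' and ``distance to the auxiliary section''---is the right one, and the direction (1)$\Rightarrow$(2) is essentially fine once you chase constants. But there are two concrete problems.

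\textbf{(a) The ``key elementary fact'' is false as stated.} You claim that $d(z,\psi_x(\pi(z)))$ is comparable to $d(z,\pi^{-1}(\pi(z)))$. Since $z\in\pi^{-1}(\pi(z))$, the latter is identically zero, so the comparison is vacuous in one direction and false in the other whenever $z\notin\psi_x(Y)$. What you actually need (and what the intrinsic $L$-Lipschitz property of $\psi_x$ does give) is a comparison at the \emph{base point}: for any $y\in Y$,
\[
d(x,\pi^{-1}(y))\;\le\; d\big(x,\psi_x(y)\big)\;\le\; L\,d(x,\pi^{-1}(y)),
\]
because $x=\psi_x(\pi(x))$ and $\psi_x(y)$ both lie on $\psi_x(Y)$. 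This is the lemma to isolate.

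\textbf{(b) The argument for (2)$\Rightarrow$(1) is muddled because you have absorbed the paper's typo in \eqref{defintrlipnuova}.} The right-hand side there should read $L\,d(\hat x,\psi(\pi(x)))$, not $L\,d(\hat x,\psi(\pi(\hat x)))$; compare the cone $C^\psi_{\hat x,L}$ and equation~\eqref{equation28.0}. Your sentence ``the right-hand side of \eqref{defintrlipnuova} is exactly $d(x,\pi^{-1}(\pi(x)))=0$'' shows you are trying to verify a trivial (and wrong) condition. With the correct target, the computation is: for $w\in\phi(Y)$,
\[
d\big(w,\psi_x(\pi(w))\big)\;\le\; d(w,x)+d\big(x,\psi_x(\pi(w))\big)\;\le\; (L_2+1)\,d\big(x,\psi_x(\pi(w))\big),
\]
using $d(w,x)\le L_2\,d(x,\pi^{-1}(\pi(w)))\le L_2\,d(x,\psi_x(\pi(w)))$. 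This gives $L_1=L_2+1$ and, notably, does not even use that $\psi_x$ is intrinsically Lipschitz. Rewriting your outline with the corrected lemma in (a) and the corrected inequality here will make both implications go through with $L_2=L(L_1+1)$ and $L_1=L_2+1$.
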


\section{Intrinsic   Lipschitz  constants and their properties}\label{Intrinsic}
In this section we introduce  intrinsically Lipschitz constants and then we prove the following statement.  Regarding the normed space theory, the reader can see  \cite{B10, M98}. 
 \begin{prop}\label{theorem2}
   Let $X$ be a normed space, $Y$ be a metric space and $\pi :X \to Y$  be a  quotient map.
Assume that   every point $x\in X$ is contained in the image of an intrinsic $L$-Lipschitz section $\psi_x$ for $\pi$ and that the section $\phi :Y \to X$ is intrinsically $L_1$-Lipschitz with respect to  $\psi_x$ at   $x$. Then for any $\bar y \in Y$ such that $\phi (\bar y)=x$ the following are true.
\begin{enumerate}
\item if $f\in C(Y, [0,1])$ then denoting $\eta = f\phi + (1-f) \psi $ the map $Y\to X$ we have that
\begin{equation*}\label{equationLeibniz.0}
Ils (\eta )(\bar y) \leq f(\bar y) Ils (\phi) (\bar y)+ (1-f(\bar y)) Ils (\psi) (\bar y).
\end{equation*}
\item If it holds
 \begin{equation*}
d(\psi^2 (\bar y), \pi ^{-1} (y)) \geq d(\psi (\bar y), \pi ^{-1} (y)) , \quad \forall y\in Y
\end{equation*}
then
\begin{equation*}\label{equationLeibniz.prod.0}
Ils (\phi \psi )(\bar y) \leq \sup |\psi | Ils (\phi) (\bar y)+ \sup |\phi | Ils (\psi) (\bar y).
\end{equation*}
\end{enumerate}
   \end{prop}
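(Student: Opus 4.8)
The plan is to argue from the pointwise definition of the intrinsic slope, which I take to be
\[
Ils(\phi)(\bar y)\;=\;\limsup_{y\to\bar y}\frac{d(\phi(\bar y),\phi(y))}{d(\phi(\bar y),\pi^{-1}(y))},
\]
and to reduce both formulae to the classical Leibniz and product estimates for difference quotients. The first thing I would record is that at the base point the relevant maps agree: since $\phi$ is intrinsically $L_1$-Lipschitz with respect to $\psi:=\psi_x$ at $x$, Definition~\ref{defwrtpsinew} together with the Remark following it produces $\hat y$ with $\phi(\hat y)=\psi(\hat y)=x$, and applying $\pi$ to $\phi(\bar y)=x=\phi(\hat y)$ with $\pi\circ\phi=\mathrm{id}_Y$ gives $\bar y=\pi(x)=\hat y$. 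Hence $\phi(\bar y)=\psi(\bar y)=x$, so $\eta(\bar y)=x$ as well, and all three slopes at $\bar y$ carry the \emph{same} denominator $d(x,\pi^{-1}(y))$. I would also extract the two quantitative bounds used below: intrinsic $L$-Lipschitzness of $\psi$ gives $d(\psi(\bar y),\psi(y))\le L\,d(x,\pi^{-1}(y))$, while \eqref{equation28.0} gives $d(\phi(y),\psi(y))\le L_1\,d(\psi(\bar y),\psi(y))$; chaining these yields the crucial uniform estimate $d(\phi(y),\psi(y))\le L_1L\,d(x,\pi^{-1}(y))$.

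For part (1), write $\eta=f\phi+(1-f)\psi$ and expand, adding and subtracting $f(\bar y)\phi(y)$ and $(1-f(\bar y))\psi(y)$, to obtain
\[
\begin{aligned}
\eta(\bar y)-\eta(y) &= f(\bar y)\bigl(\phi(\bar y)-\phi(y)\bigr)+(1-f(\bar y))\bigl(\psi(\bar y)-\psi(y)\bigr)\\
&\quad +\bigl(f(\bar y)-f(y)\bigr)\bigl(\phi(y)-\psi(y)\bigr).
\end{aligned}
\]
Dividing by $d(x,\pi^{-1}(y))$ and using the triangle inequality, the first two terms converge, upon taking $\limsup_{y\to\bar y}$, to $f(\bar y)\,Ils(\phi)(\bar y)$ and $(1-f(\bar y))\,Ils(\psi)(\bar y)$, precisely because the denominators already match. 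The whole point is the remainder: by the chained estimate it is controlled by $|f(\bar y)-f(y)|\,L_1L$, and continuity of $f$ at $\bar y$ forces $|f(\bar y)-f(y)|\to0$, so it vanishes in the limit. Subadditivity of $\limsup$ then gives exactly the asserted Leibniz inequality.

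For part (2), where the pointwise product and submultiplicativity of the norm are used, I would start from the (non-commutative) identity $\phi(\bar y)\psi(\bar y)-\phi(y)\psi(y)=(\phi(\bar y)-\phi(y))\psi(\bar y)+\phi(y)(\psi(\bar y)-\psi(y))$, giving
\[
d\bigl((\phi\psi)(\bar y),(\phi\psi)(y)\bigr)\le \sup|\psi|\,d(\phi(\bar y),\phi(y))+\sup|\phi|\,d(\psi(\bar y),\psi(y)).
\]
The one genuine difficulty here is the denominator mismatch: the slope of $\phi\psi$ at $\bar y$ carries $d((\phi\psi)(\bar y),\pi^{-1}(y))=d(\psi^2(\bar y),\pi^{-1}(y))$ (using $\phi(\bar y)=\psi(\bar y)$), whereas the two factor slopes carry $d(x,\pi^{-1}(y))=d(\psi(\bar y),\pi^{-1}(y))$. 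This is exactly what the standing hypothesis $d(\psi^2(\bar y),\pi^{-1}(y))\ge d(\psi(\bar y),\pi^{-1}(y))$ is for: it lets me replace the larger denominator by the smaller one, so dividing the displayed bound and passing to $\limsup$ yields $Ils(\phi\psi)(\bar y)\le \sup|\psi|\,Ils(\phi)(\bar y)+\sup|\phi|\,Ils(\psi)(\bar y)$.

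The step I expect to require the most care is the remainder $(f(\bar y)-f(y))(\phi(y)-\psi(y))$ in part (1): without the bound $d(\phi(y),\psi(y))\le L_1L\,d(x,\pi^{-1}(y))$ this term need not be negligible, and it is precisely here that \emph{both} the intrinsic $L$-Lipschitzness of the reference section $\psi_x$ and the intrinsic $L_1$-Lipschitzness of $\phi$ with respect to it are indispensable, with continuity of $f$ supplying the final decay. In part (2) the analogous pressure point is the denominator comparison, which is handed to us by hypothesis rather than proved.
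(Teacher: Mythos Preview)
Your argument is correct, and for part~(2) it is essentially identical to the paper's: the same product identity, the same $\sup$-bounds on the factors, and the same use of the hypothesis $d(\psi^2(\bar y),\pi^{-1}(y))\ge d(\psi(\bar y),\pi^{-1}(y))$ to reconcile the denominators.

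For part~(1) you take a genuinely different route. The paper exploits the coincidence $\phi(\bar y)=\psi(\bar y)$ more fully: since $\eta(\bar y)=\phi(\bar y)=\psi(\bar y)$, one may write $\eta(\bar y)=f(y)\phi(\bar y)+(1-f(y))\psi(\bar y)$ for \emph{any} $y$, which yields the remainder-free decomposition
\[
\eta(y)-\eta(\bar y)=f(y)\bigl(\phi(y)-\phi(\bar y)\bigr)+(1-f(y))\bigl(\psi(y)-\psi(\bar y)\bigr).
\]
Dividing by the common denominator and letting $y\to\bar y$ (using only continuity of $f$) gives the Leibniz inequality directly; the intrinsic Lipschitz hypotheses on $\phi$ and $\psi$ are not actually used in this step beyond ensuring finiteness of the slopes. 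Your decomposition instead freezes the coefficients at $f(\bar y),\,1-f(\bar y)$ and produces the extra term $(f(\bar y)-f(y))(\phi(y)-\psi(y))$, which you then control via the chained bound $d(\phi(y),\psi(y))\le L_1L\,d(x,\pi^{-1}(y))$. This is valid and makes transparent where the quantitative hypotheses would be indispensable if $\phi(\bar y)\ne\psi(\bar y)$, but it is more work than the paper's approach, which shows that at a coincidence point the Leibniz estimate is essentially algebraic.
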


\subsection{Intrinsic   Lipschitz  constants}
 We adapt the theory of  \cite{C99, DM} in our intrinsic case.

\begin{defi}\label{def_ILS.1} Let $\phi:Y\to X$ be a section of $\pi$. Then we define 
\begin{equation*}
ILS (\phi):= \sup _{\substack{y_1, y_2 \in Y \\ y_1\ne y_2}} \frac{d(\phi (y_1), \phi (y_2))}{  d(\phi (y_1), \pi ^{-1} (y_2)) } \in [1,\infty ]
\end{equation*}
and
\begin{equation*}
\begin{aligned}
ILS (Y,X,\pi ) &:= \{ \phi :Y \to X \,:\, \phi \mbox{ is an intrinsically Lipschitz section of $\pi$ and }  ILS(\phi) < \infty \},\\
ILS_{b} (Y,X,\pi) & := \{ \phi \in  ILS (Y,X,\pi) \,:\, \mbox{spt}(\phi) \mbox{ is bounded} \}.\\
\end{aligned}
\end{equation*}
For simplicity, we will write $ILS (Y,X)$ instead of  $ILS (Y,X,\pi ).$
\end{defi}

\begin{defi}\label{def_ILS.2} Let $\phi:Y\to X$ be a  section of $\pi$. Then we define the local intrinsically Lipschitz constant (also called slope) of $\phi$ the map $Ils:Y \to [1,+\infty )$ defined as 
\begin{equation*}
Ils (\phi) (z):= \limsup _{y\to z} \frac{d(\phi (y), \phi (z))}{  d(\phi (y), \pi ^{-1} (z)) },
\end{equation*}
if $z \in Y$ is an accumulation point; and $Ils (\phi) (z):=0$ otherwise.
\end{defi}

\begin{defi}\label{def_ILS.3} Let $\phi:Y\to X$ be a section of $\pi$. Then we define the asymptotic  intrinsically Lipschitz constant of $\phi$ the map $Ils_a:Y \to [1,+\infty )$ given by
\begin{equation*}
Ils_a (f) (z):= \limsup _{y_1,y_2\to z}\frac{d(\phi (y_1),\phi (y_2))}{  d(\phi (y_1), \pi ^{-1} (y_2)) }
\end{equation*}
if $z \in Y$ is an accumulation point and $Ils (\phi) (z):=0$ otherwise.
\end{defi}
 
\begin{rem}\label{defrem} Notice that by $\phi (y_2) \in \pi ^{-1} (y_2),$ it is trivial that $ d(\phi (y_1), \pi ^{-1} (y_2)) \leq d(\phi (y_1), \phi (y_2))$ and so $Ils(\phi) \geq 1.$ Moreover, it holds 
\begin{equation*}
Ils(\phi) \leq Ils_a (\phi ) \leq ILS(\phi).
\end{equation*}
\end{rem}

\subsection{Leibniz formula for the slope} Using Leibniz formula for the intrinsic slope, it is possible to give a convex set of intrinsically Lipschitz sections for the set $ILS _{\psi , \hat x}$ given by Definition \ref{defwrtpsinew.9apr} (see Corollary \eqref{corollIMPO}).
 \begin{prop}[Leibniz formula for the slope]\label{propLeibnitz formula for slope} 
Let $X$ be a normed space, $Z$ be an open set of a metric space $Y$ and let $\phi , \psi  \in ILS_{loc}(Z,X)$ such that $\phi (\bar y)= \psi (\bar y)$ for some $\bar y\in Z$ and $f\in C(Z, [0,1]).$  Then denoting $\eta = f\phi + (1-f) \psi $ the map $Z\to X$ we have that
\begin{equation}\label{equationLeibnitzNEW}
Ils (\eta )(\bar y) \leq f(\bar y) Ils (\phi) (\bar y)+ (1-f(\bar y)) Ils (\psi) (\bar y).
\end{equation}

\end{prop}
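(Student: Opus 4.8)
The plan is to reduce the statement to the convexity of the norm, once one observes that the three maps $\phi,\psi,\eta$ share the same value at $\bar y$ and, crucially, the same denominator in the slope quotient. First I would record the value at the base point: since $\phi(\bar y)=\psi(\bar y)$ and $f(\bar y)\in[0,1]$,
\[
\eta(\bar y)=f(\bar y)\phi(\bar y)+(1-f(\bar y))\psi(\bar y)=\phi(\bar y)=\psi(\bar y)=:\hat x .
\]
Writing everything relative to $\hat x$ and using $\phi(\bar y)=\psi(\bar y)=\hat x$, one gets the pointwise identity $\eta(y)-\hat x=f(y)(\phi(y)-\hat x)+(1-f(y))(\psi(y)-\hat x)$, so the triangle inequality in the normed space $X$ yields the numerator bound
\[
\norm{\eta(y)-\eta(\bar y)}\le f(y)\,\norm{\phi(y)-\phi(\bar y)}+(1-f(y))\,\norm{\psi(y)-\psi(\bar y)}
\]
for every $y\in Z$. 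This is the soft, purely metric part of the argument.

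The key structural step concerns the denominators. I would show that for a section $\sigma$ of $\pi$ the distance $d(\sigma(y),\pi^{-1}(\bar y))$ does not depend on the section but only on $y$ and $\bar y$. Indeed, since $\hat x\in\pi^{-1}(\bar y)$ we may write $\pi^{-1}(\bar y)=\hat x+\ker\pi$ and $\pi^{-1}(y)=\sigma(y)+\ker\pi$, so that $d(\sigma(y),\pi^{-1}(\bar y))=d(\sigma(y)-\hat x,\ker\pi)$ is the quotient norm of a coset of $\ker\pi$ that depends only on $y$. In particular
\[
d(\eta(y),\pi^{-1}(\bar y))=d(\phi(y),\pi^{-1}(\bar y))=d(\psi(y),\pi^{-1}(\bar y))=:\delta(y),
\]
and the same computation shows $\eta$ is itself a section. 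This is precisely the point at which the linear/quotient structure of $\pi$ on the normed space $X$ is used, exactly as in the setting of Theorem~\ref{theorem}.

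Dividing the numerator bound by $\delta(y)$ gives, for all $y\neq\bar y$ near $\bar y$,
\[
\frac{\norm{\eta(y)-\eta(\bar y)}}{\delta(y)}\le f(y)\,\frac{\norm{\phi(y)-\phi(\bar y)}}{\delta(y)}+(1-f(y))\,\frac{\norm{\psi(y)-\psi(\bar y)}}{\delta(y)} .
\]
To conclude I would take $\limsup_{y\to\bar y}$. Since $\phi,\psi\in ILS_{loc}(Z,X)$, the two quotients on the right are bounded near $\bar y$, and their $\limsup$s are exactly $Ils(\phi)(\bar y)$ and $Ils(\psi)(\bar y)$, both finite. Continuity of $f$ gives $f(y)\to f(\bar y)$, so replacing $f(y)$ by $f(\bar y)$ costs a term of the form $(f(y)-f(\bar y))\bigl(\text{bounded}\bigr)$ that vanishes in the limit; subadditivity of $\limsup$ then yields $Ils(\eta)(\bar y)\le f(\bar y)Ils(\phi)(\bar y)+(1-f(\bar y))Ils(\psi)(\bar y)$, which is \eqref{equationLeibnitzNEW}.

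I expect the main obstacle to be the denominator identity rather than the numerator estimate. Unlike the norm, the distance-to-fibre is not a linear (nor even concave) functional, so one cannot simply combine the denominators of $\phi$ and $\psi$; the argument that $d(\,\cdot\,,\pi^{-1}(\bar y))$ factors through the quotient and is therefore section-independent is the only place where the ambient normed/linear structure is genuinely needed. Once the common value $\delta(y)$ is secured, the remainder is the elementary passage to the $\limsup$ using the continuity of $f$ and the local finiteness of the slopes.
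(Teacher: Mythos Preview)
Your numerator estimate and the passage to the $\limsup$ via continuity of $f$ match the paper's argument. The divergence is entirely in how you treat the denominator, and that is where your proof acquires a gap.

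The paper divides by $d(\eta(\bar y),\pi^{-1}(y))$. Since $\eta(\bar y)=\phi(\bar y)=\psi(\bar y)=\hat x$, this quantity is \emph{automatically} equal to $d(\phi(\bar y),\pi^{-1}(y))=d(\psi(\bar y),\pi^{-1}(y))$: the three slope quotients share a common denominator for free, with no hypothesis on $\pi$ beyond it being a quotient map. You instead fix the fibre over $\bar y$ and try to show that $d(\sigma(y),\pi^{-1}(\bar y))$ is section-independent. To do so you write $\pi^{-1}(\bar y)=\hat x+\ker\pi$ and $\pi^{-1}(y)=\sigma(y)+\ker\pi$, which presupposes that $\pi$ is linear (so that fibres are cosets of a kernel). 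That assumption is \emph{not} present in the proposition: only $X$ is assumed to be a normed space, while $Y$ is merely a metric space and $\pi$ a general quotient map. So, relative to the stated hypotheses, your argument fails precisely at the step you yourself identified as the main obstacle; the appeal to ``the setting of Theorem~\ref{theorem}'' imports structure that is unavailable here.

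In short, the paper's trick is to anchor the denominator at the common value $\hat x=\eta(\bar y)$ rather than at the moving point $\sigma(y)$; this makes the common-denominator identity tautological and eliminates any need for linear structure on $\pi$. (If you are reading Definition~\ref{def_ILS.2} literally, you will notice the paper's proof uses the form $d(\phi(\bar y),\pi^{-1}(y))$ rather than $d(\phi(y),\pi^{-1}(\bar y))$; it is exactly this choice that makes the argument go through without extra hypotheses.)
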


 \begin{proof} 
Notice that $\eta (\bar y)=\phi (\bar y) = \psi (\bar y)$ and that for every $y \in Y$ we have
\begin{equation*}
\begin{aligned}
\|\eta (y)-\eta (\bar y)\|  & =\| f(y)(\phi (y)- \phi (\bar y)) + (1-f(y)) (\psi (y)-\psi(\bar y))\|
\end{aligned}
\end{equation*}
and so
\begin{equation*}
\begin{aligned}
\|\eta (y)-\eta (\bar y)\| & \leq f(y) \|\phi (y)-\phi (\bar y)\|+ (1-f(y)) \|\psi (y)-\psi (\bar y)\|,
\end{aligned}
\end{equation*}
Hence, dividing for  $d(\eta (\bar y), \pi ^{-1} (y))$    we obtain
 \begin{equation*}
\begin{aligned}
\frac{d(\eta (y),\eta (\bar y))}{d(\eta (\bar y), \pi ^{-1} (y))}& \leq  f(y) \frac{   d(\phi (y), \phi (\bar y))}{d(\eta  (\bar y), \pi ^{-1} (y))}  + 
(1-f(y))\frac{ d(\psi (y),\psi (\bar y))}{d(\eta  (\bar y), \pi ^{-1} (y))}   \\
\end{aligned}
\end{equation*}
Now taking the supremum in $y\in B(\bar y , r)$ on the left hand side and letting $r\to 0$ we get the thesis \eqref{equationLeibnitzNEW}.
\end{proof}

 \begin{coroll} 
Let $X$ be a normed space, $Z$ be an open set of a metric space $Y$ and let $\phi , \psi  \in ILS_{loc}(Z,X)$ such that $\phi (\bar y)= \psi (\bar y)$ for some $\bar y\in Z.$ Then 
\begin{equation*} 
Ils (\alpha \phi + \beta \psi ) (\bar y) \leq |\alpha | Ils ( \phi ) (\bar y)+ |\beta | Ils ( \psi ) (\bar y ), 
\end{equation*}
for any $\alpha , \beta \in \R$ such that $\alpha +\beta =1.$
\end{coroll}

 \begin{coroll}\label{convexity} 
Under the same assumption of Proposition \ref{propLeibnitz formula for slope}, we have that a convex combination of $\phi$ and $\psi$ is also an intrinsically Lipschitz section of $\pi$ at any point $\bar y\in Y$ such that $\phi (\bar y)=\psi (\bar y).$
\end{coroll}

We conclude this section given an immediately  consequence of Corollary \ref{convexity} and Proposition \ref{linkintrinsicocneelip}.
 \begin{coroll}\label{corollIMPO}
  Let $\pi :X \to Y$ be a quotient map from a normed space $X$ to a metric space $Y.$ Assume also that $\psi :Y \to X$ is an intrinsically $L$-Lipschitz section of $\pi$ with $L \geq 1$ and $\hat x \in X.$ Then, the set $ILS _{\psi , \hat x}$ is a convex set.
\end{coroll}

\subsection{Product for the slope} An important point in order to get a relation between the intrinsic slope of a section and its square is the following result.
 \begin{prop}[Product of the slope]\label{propslope.9apr}
Let $X$ be a normed space, $Z$ be an open set of a metric space $Y$ and let $\phi , \psi  \in ILS_{b}(Z,X)$ such that $\phi (\bar y)= \psi (\bar y)$ for some $\bar y\in Z.$ We also assume that
\begin{equation*}
d(\phi^2 (\bar y), \pi ^{-1} (y)) \geq d(\phi (\bar y), \pi ^{-1} (y)) , \quad \forall y\in Y.
\end{equation*}

Then
\begin{equation}\label{equationLeibnitz.prod}
Ils (\phi \psi )(\bar y) \leq \sup |\psi | Ils (\phi) (\bar y)+ \sup |\phi | Ils (\psi) (\bar y).
\end{equation}

\end{prop}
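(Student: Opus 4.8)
The plan is to run the argument of Proposition~\ref{propLeibnitz formula for slope} almost verbatim, replacing the convex-combination identity by the algebraic product rule and compensating for this with the submultiplicativity of the norm of $X$. Write $\eta=\phi\psi$ for the pointwise product. The two facts that drive everything are: since $\phi(\bar y)=\psi(\bar y)$ we have $\eta(\bar y)=\phi(\bar y)\psi(\bar y)=\phi^2(\bar y)$; and, adding and subtracting $\phi(\bar y)\psi(y)$,
\[
\eta(y)-\eta(\bar y)=(\phi(y)-\phi(\bar y))\,\psi(y)+\phi(\bar y)\,(\psi(y)-\psi(\bar y)).
\]

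Next I would take norms and use $\|ab\|\le\|a\|\,\|b\|$ together with the crude bounds $\|\psi(y)\|\le\sup|\psi|$ and $\|\phi(\bar y)\|\le\sup|\phi|$, both finite precisely because $\phi,\psi\in ILS_{b}(Z,X)$, to get
\[
\|\eta(y)-\eta(\bar y)\|\le\sup|\psi|\,\|\phi(y)-\phi(\bar y)\|+\sup|\phi|\,\|\psi(y)-\psi(\bar y)\|.
\]
Dividing by $d(\eta(\bar y),\pi^{-1}(y))$ isolates, on the right, the two quotients built from $\phi$ and $\psi$, weighted by $\sup|\psi|$ and $\sup|\phi|$ respectively.

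The decisive step --- the only one using the extra hypothesis --- is to turn the denominator $d(\eta(\bar y),\pi^{-1}(y))=d(\phi^2(\bar y),\pi^{-1}(y))$ into the denominators appearing in $Ils(\phi)(\bar y)$ and $Ils(\psi)(\bar y)$. Here the assumption $d(\phi^2(\bar y),\pi^{-1}(y))\ge d(\phi(\bar y),\pi^{-1}(y))$ gives $1/d(\eta(\bar y),\pi^{-1}(y))\le 1/d(\phi(\bar y),\pi^{-1}(y))$, and, since $\phi(\bar y)=\psi(\bar y)$, the same bound holds with $\psi(\bar y)$ replacing $\phi(\bar y)$. Enlarging each quotient accordingly bounds the right-hand side by
\[
\sup|\psi|\,\frac{d(\phi(y),\phi(\bar y))}{d(\phi(\bar y),\pi^{-1}(y))}+\sup|\phi|\,\frac{d(\psi(y),\psi(\bar y))}{d(\psi(\bar y),\pi^{-1}(y))}.
\]
Finally, exactly as at the end of the Leibniz proof, I would take the supremum over $y\in B(\bar y,r)$ and let $r\to0$, so that each quotient tends to the corresponding slope and \eqref{equationLeibnitz.prod} follows.

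I expect the main difficulty to be bookkeeping rather than conceptual. Unlike in the Leibniz formula, where $\eta(\bar y)$ coincides with $\phi(\bar y)=\psi(\bar y)$ and the single denominator $d(\eta(\bar y),\pi^{-1}(y))$ is automatically the right one for both slopes, here $\eta(\bar y)=\phi^2(\bar y)$ differs from $\phi(\bar y)$, and reconciling the one denominator with both slopes is exactly what forces the hypothesis $d(\phi^2(\bar y),\pi^{-1}(y))\ge d(\phi(\bar y),\pi^{-1}(y))$ and the coincidence $\phi(\bar y)=\psi(\bar y)$ into the statement. A secondary point worth making explicit is that the symbols $\phi\psi$ and $\phi^2$ presuppose a multiplicative structure on $X$ with submultiplicative norm (e.g.\ $X$ a normed algebra), which is what legitimizes the inequality $\|ab\|\le\|a\|\,\|b\|$; no completeness or compactness is needed, and passing from $ILS_{loc}$ to $ILS_{b}$ is precisely what guarantees $\sup|\phi|,\sup|\psi|<\infty$.
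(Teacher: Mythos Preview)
Your proposal is correct and follows essentially the same route as the paper's proof: add and subtract a mixed term to split $\eta(y)-\eta(\bar y)$, bound each piece by $\sup|\phi|$ or $\sup|\psi|$ times the corresponding increment, divide by $d(\eta(\bar y),\pi^{-1}(y))=d(\phi^{2}(\bar y),\pi^{-1}(y))$, and then invoke the hypothesis together with $\phi(\bar y)=\psi(\bar y)$ to replace that denominator by $d(\phi(\bar y),\pi^{-1}(y))=d(\psi(\bar y),\pi^{-1}(y))$ before passing to the $\limsup$. The only cosmetic difference is that the paper inserts $\phi(y)\psi(\bar y)$ rather than $\phi(\bar y)\psi(y)$; your remarks on the need for a submultiplicative norm and on why $ILS_{b}$ enters are more explicit than what the paper records.
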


 \begin{proof} 
Notice that  for every $y \in Y$ we have
\begin{equation*}
\begin{aligned}
d(\phi (y) \psi (y), \phi(\bar y) \psi (\bar y))& \leq d(\phi (y) \psi (y), \phi( y) \psi (\bar y)) + d(\phi (y) \psi (\bar y), \phi(\bar y) \psi (\bar y))\\
& \leq \sup |\phi | d( \psi (y), \psi (\bar y)) + |\psi (\bar y)| d(\phi (y), \phi(\bar y))\\
\end{aligned}
\end{equation*}
Hence, recall that $\phi (\bar y)=\psi (\bar y),$ dividing for  $d(\phi ^2(\bar y), \pi ^{-1} (y))$    we deduce that
 \begin{equation*}
\begin{aligned}
\frac{d(\phi (y) \psi (y), \phi(\bar y) \psi (\bar y)) }{d( \phi ^2(\bar y), \pi ^{-1} (y))}& \leq  \sup |\phi |  \frac{   d( \psi (y), \psi (\bar y)) }{d(\phi ^2 (\bar y), \pi ^{-1} (y))}  +   \psi (\bar y ) \frac{  d(\phi (y), \phi(\bar y))  }{d(\phi ^2 (\bar y), \pi ^{-1} (y))} \\
& \leq  \sup |\phi |  \frac{   d( \psi (y), \psi (\bar y)) }{d(\phi  (\bar y), \pi ^{-1} (y))}  +   \psi (\bar y ) \frac{  d(\phi (y), \phi(\bar y))  }{d(\psi  (\bar y), \pi ^{-1} (y))} \\
\end{aligned}
\end{equation*}
Now taking the supremum in $y\in B(\bar y , r)$ on the left hand side and letting $r\to 0$ we get the thesis \eqref{equationLeibnitz.prod}.
\end{proof}

 \begin{coroll} 
Let $X$ be a normed space, $Z$ be an open set of a metric space $Y$ and let  $\phi  \in ILS_{b}(Z,\R)$ such that 
$
d(\phi^2 (\bar y), \pi ^{-1} (y)) \geq d(\phi (\bar y), \pi ^{-1} (y)) , \, \forall y\in Y.
$
Then, $$ Ils (\phi ^2)(\bar y) \leq 2\sup |\phi | Ils (\phi) (\bar y).$$
\end{coroll}

 \begin{proof} 
The statement follows in a similar way of Proposition \ref{propslope.9apr}   noting that for any $y\in Y$
\begin{equation*}
\begin{aligned}
d(\phi ^2(y) , \phi ^2(\bar y) )& \leq  2 \sup |\phi |  d(\phi (y), \phi(\bar y)).\\
\end{aligned}
\end{equation*}
\end{proof}

\subsection{Proof of Proposition \ref{theorem2}} Noting that by Proposition \ref{linkintrinsicocneelip}, we have that $\phi$ is also an intrinsically Lipschitz section. Hence, the proof of Proposition \ref{theorem2} is a trivial consequence of Proposition \ref{propLeibnitz formula for slope} and \ref{propslope.9apr}.

\section{Maximum, minimum and inverse of intrinsically Lipschitz  sections}
The next proposition summarizes the basic properties of real valued intrinsically Lipschitz sections. Here, for any sections $\phi , \psi :Y \to \R$ of $\pi :\R \to Y,$ we define 
\begin{equation*}
\begin{aligned}
(\phi \vee \psi )(y)& = \max\{\phi(y), \psi (y)\},   \quad \forall y\in Y,\\
(\phi \wedge \psi )(y)&=\min\{\phi(y), \psi (y)\}, \quad \forall y\in Y.
\end{aligned}
\end{equation*}

 \begin{prop}
   Let $Y$ be a topological space and $\phi , \psi :Y \to \R$ be intrinsically Lipschitz sections of $\pi$ with constants $L_\phi$ and $L_\psi$ with $L_\phi, L_\psi \geq 1$. Then,
\begin{enumerate}
\item If $\phi (y) \geq \varepsilon >0$ for any $y\in Y,$ then $1/\phi$ is an intrinsically $M$-Lipschitz section with $M = 1/\varepsilon ^2 L_\phi.$
\item $\phi \vee \psi$ and $\phi \wedge \psi$ are intrinsically Lipschitz sections with $L (\phi \vee \psi) , L(\phi \wedge \psi) = \max\{L_\phi , L_\psi \}$.
\end{enumerate}

   \end{prop}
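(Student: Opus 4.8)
The plan is to verify, for each of the three maps $1/\phi$, $\phi\vee\psi$, $\phi\wedge\psi$, the defining inequality of Definition~\ref{Intrinsic Lipschitz section} in the pointwise form $|h(y_1)-h(y_2)|\le L\, d(h(y_1),\pi^{-1}(y_2))$, using throughout that on $X=\R$ one has $d(a,b)=|a-b|$ and $d(a,\pi^{-1}(y))=\inf_{x\in\pi^{-1}(y)}|a-x|$. Two preliminary points must be settled first: that each candidate map is again a section of $\pi$, and then the Lipschitz estimate. For $\phi\vee\psi$ and $\phi\wedge\psi$ the section property is immediate, since at every $y$ the value is one of $\phi(y),\psi(y)$, both lying in $\pi^{-1}(y)$; hence $\pi\circ(\phi\vee\psi)=\pi\circ(\phi\wedge\psi)=\mathrm{id}_Y$. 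For $1/\phi$ the section property is the one genuinely structural requirement, and I would record at the outset the hypothesis on $\pi$ that guarantees $1/\phi(y)\in\pi^{-1}(y)$.

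For part (1) I would start from the algebraic identity
\[
\Big|\tfrac{1}{\phi(y_1)}-\tfrac{1}{\phi(y_2)}\Big|=\frac{|\phi(y_1)-\phi(y_2)|}{\phi(y_1)\phi(y_2)}\le \frac{1}{\varepsilon^2}\,|\phi(y_1)-\phi(y_2)|,
\]
where $\phi\ge\varepsilon$ controls the denominator. Applying the intrinsic $L_\phi$-Lipschitz inequality for $\phi$ gives $|\phi(y_1)-\phi(y_2)|\le L_\phi\, d(\phi(y_1),\pi^{-1}(y_2))$, and combining yields a bound of the form $\tfrac{L_\phi}{\varepsilon^2}\,d(\phi(y_1),\pi^{-1}(y_2))$, which is exactly the asserted constant $M=L_\phi/\varepsilon^2$ times a fibre distance. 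The remaining, delicate step is to replace $d(\phi(y_1),\pi^{-1}(y_2))$ by $d\big(\tfrac1{\phi(y_1)},\pi^{-1}(y_2)\big)$, i.e.\ to move the base point of the fibre distance from $\phi(y_1)$ to its reciprocal.

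For part (2) I would use the standard device for lattice operations on Lipschitz maps. Writing $a_i=\phi(y_i)$, $b_i=\psi(y_i)$ and assuming without loss of generality that $(\phi\vee\psi)(y_1)=\phi(y_1)\ge\psi(y_1)$, I would split according to whether $(\phi\vee\psi)(y_2)$ equals $\phi(y_2)$ or $\psi(y_2)$, and in each case dominate the value at $y_2$ by the branch that is active at $y_1$. Concretely, $\max(a_2,b_2)\ge a_2$ gives $\phi(y_1)-(\phi\vee\psi)(y_2)\le \phi(y_1)-\phi(y_2)$, reducing the decrease direction to the Lipschitz bound for $\phi$; the increase direction, when the active branch switches to $\psi$, becomes $\psi(y_2)-\phi(y_1)\le\psi(y_2)-\psi(y_1)$ via $\phi(y_1)\ge\psi(y_1)$, reducing it to the Lipschitz bound for $\psi$. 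Taking $\max\{L_\phi,L_\psi\}$ absorbs both constants, and the case of $\phi\wedge\psi$ is the mirror image of the same argument.

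The main obstacle in both parts is identical and is peculiar to the \emph{intrinsic} condition: unlike the classical case, the right-hand side $d(h(y_1),\pi^{-1}(y_2))$ depends on the image point $h(y_1)$, not merely on $y_1,y_2$. In part (1) one must therefore compare the fibre distance measured from $1/\phi(y_1)$ with that measured from $\phi(y_1)$; in part (2), when the active branch switches, one must compare the fibre distance from $\psi(y_1)$ with that from $\phi(y_1)$. I expect to control these by combining the $1$-Lipschitz dependence of $d(\cdot,\pi^{-1}(y_2))$ on its base point with the linear order on $\R$ (the nearest fibre point lies on a definite side of the base point), and, in the group/coset model underlying the standard example, the translation invariance of the metric, which makes $d(\phi(y_1),\pi^{-1}(y_2))$ depend only on $\phi(y_1)-\phi(y_2)$ modulo the subgroup. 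Pinning down precisely which structural assumption on $\pi$ legitimises these base-point changes, and hence makes the clean constants $L_\phi/\varepsilon^2$ and $\max\{L_\phi,L_\psi\}$ exact, is the crux of the proof.
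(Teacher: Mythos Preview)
Your approach is essentially the same as the paper's: the algebraic identity $|1/\phi(y_1)-1/\phi(y_2)|=|\phi(y_1)-\phi(y_2)|/(\phi(y_1)\phi(y_2))$ for part (1), and the branch-by-branch case analysis for $\phi\vee\psi$ (then the mirror argument for $\phi\wedge\psi$) for part (2). On this level the plans coincide.

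Where you differ is in rigour, not in strategy. The two issues you isolate as ``the crux''---that $1/\phi$ must again be a section, and that the fibre distance $d(h(y_1),\pi^{-1}(y_2))$ depends on the base point $h(y_1)$---are simply not addressed in the paper's proof. For part (1) the paper stops at
\[
\Big|\tfrac{1}{\phi(y_1)}-\tfrac{1}{\phi(y_2)}\Big|\le \tfrac{L_\phi}{\varepsilon^2}\,d(\phi(y_1),\pi^{-1}(y_2)),
\]
with $\phi(y_1)$, not $1/\phi(y_1)$, on the right; no passage to $d(1/\phi(y_1),\pi^{-1}(y_2))$ is attempted. For part (2) the paper's case split always takes $\eta(y_1)=\phi(y_1)$ and lands on $d(\phi(y_1),\pi^{-1}(y_2))=d(\eta(y_1),\pi^{-1}(y_2))$, so the base-point issue is avoided in the displayed case; the remaining cases (and the ``$L_\psi$ instead of $L_\phi$'' line) are only sketched. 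In short, you have reproduced the paper's argument and, in addition, correctly identified gaps that the paper leaves open; do not expect the paper to supply the missing structural hypothesis on $\pi$ that you are looking for.
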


 \begin{proof}
(1). This follows noting that for any $y_1, y_2 \in Y$
\begin{equation*}
\begin{aligned}
\left|\frac 1{\phi (y_1)} - \frac 1{\phi (y_2)}  \right| = \frac {|\phi (y_1)-\phi (y_2)|}{|\phi (y_1) \phi (y_2)|} \leq  \frac {L_\phi }{\varepsilon ^2} d(\phi (y_1), \pi^{-1}(y_2)).
\end{aligned}
\end{equation*}

(2). Let $\eta =\phi \vee \psi $ and fix $y_1, y_2 \in Y.$ If $\eta (y_i)=\phi(y_i)$ or $\eta (y_i)=\psi(y_i)$ for $i=1,2$ the statement is trivial. Hence, we suppose that $\eta (y_1)=\phi (y_1), \eta (y_2)=\psi (y_2)$ and $\phi (y_1)<\psi (y_2).$ Then,
\begin{equation*}
\begin{aligned}
\eta (y_1)-\eta (y_2) = \phi (y_1)-\psi (y_2) \leq \phi (y_1)-\phi (y_2) \leq L_\phi d(\phi (y_1),\pi^{-1}(y_2)) .
\end{aligned}
\end{equation*}
If $\phi (y_1)>\psi (y_2),$ we get the inequality with $L_\psi$ instead of $L_\phi$. By a similar argument, using the formula $\phi \vee \psi = -((-\phi) \wedge (-\psi))$, we deduce the statement from $\phi \wedge  \psi .$   The proof of statement is complete.
   \end{proof}

\section{A suitable convex set for intrinsically Lipschitz sections} In the last sections we get the Leibniz formula and the product formula at a special point $\hat x\in X.$ Here, we obtain these results for any point with additional hypothesis. Moreover, we build a convex set of intrinsically Lipschitz sections (see Corollary \ref{corollCONVEXD}).

 \begin{prop}[Leibniz formula for the slope (stronger version)]\label{propLeibnitz formula for slope.27} 
Let $X$ be a normed and  convex space, $Y$ be  a metric space and let $\phi $ and $ \psi $ be  intrinsically $L$-Lipschitz sections of $\pi$ such that
\begin{enumerate}
\item  $Im(\phi)=Im(\psi).$
\item it holds
\begin{equation*}
 \frac{  d(f  (z_1), \pi ^{-1} (y)) }{d(f(z_2) , \pi ^{-1} (y))} \leq \ell <\infty, \quad  \mbox{ for } f=\phi , \psi , \,\, \forall z_1,z_2,y \in Y.
\end{equation*}
\end{enumerate}
 Denoting $\eta = t \phi + (1-t) \psi $ the map $Y\to X$ with $t\in [0,1]$  we have that
\begin{equation}\label{equationLeibnitz}
Ils (\eta )( y) \leq t \ell Ils (\phi) ( y)+ (1-t)\ell Ils (\psi) ( y),\quad \forall y\in Y.
\end{equation}

\end{prop}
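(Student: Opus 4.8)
The plan is to run the same scheme as in the proof of Proposition~\ref{propLeibnitz formula for slope}, but now at an \emph{arbitrary} point $y\in Y$ rather than at a coincidence point, compensating for the fact that $\phi(y)$, $\psi(y)$ and $\eta(y)$ need no longer agree by bringing in hypotheses (1) and (2). So I would fix $y\in Y$ and let $w\to y$ be the running variable in the $\limsup$ defining $Ils(\eta)(y)$, following the denominator bookkeeping of the earlier proof, i.e. dividing by $d(\eta(y),\pi^{-1}(w))$.

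The numerator is handled exactly as before: since $X$ is normed and $t\in[0,1]$,
\begin{equation*}
\|\eta(w)-\eta(y)\|=\|t(\phi(w)-\phi(y))+(1-t)(\psi(w)-\psi(y))\|\le t\,\|\phi(w)-\phi(y)\|+(1-t)\,\|\psi(w)-\psi(y)\|.
\end{equation*}
The genuinely new step — and the main obstacle — is to trade the mixed denominator $d(\eta(y),\pi^{-1}(w))$ for the natural denominators $d(\phi(y),\pi^{-1}(w))$ and $d(\psi(y),\pi^{-1}(w))$, at the cost of the uniform factor $\ell$. This is where convexity and (1) enter. Since $\phi(y)\in Im(\phi)$ and $\psi(y)\in Im(\psi)=Im(\phi)$, the convexity assumption guarantees that the convex combination $\eta(y)=t\phi(y)+(1-t)\psi(y)$ again lies in the common image $Im(\phi)=Im(\psi)$; hence there exist $y',y''\in Y$ with $\eta(y)=\phi(y')=\psi(y'')$. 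Applying hypothesis (2) with $f=\phi$ to the pair $(y,y')$ and with $f=\psi$ to the pair $(y,y'')$, with fiber index $w$, gives
\begin{equation*}
d(\phi(y),\pi^{-1}(w))\le \ell\, d(\phi(y'),\pi^{-1}(w))=\ell\, d(\eta(y),\pi^{-1}(w)),\qquad d(\psi(y),\pi^{-1}(w))\le \ell\, d(\eta(y),\pi^{-1}(w)).
\end{equation*}

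Finally I would divide the numerator estimate by $d(\eta(y),\pi^{-1}(w))$ and insert these two comparisons to rewrite each term against its own denominator, obtaining
\begin{equation*}
\frac{d(\eta(w),\eta(y))}{d(\eta(y),\pi^{-1}(w))}\le t\ell\,\frac{d(\phi(w),\phi(y))}{d(\phi(y),\pi^{-1}(w))}+(1-t)\ell\,\frac{d(\psi(w),\psi(y))}{d(\psi(y),\pi^{-1}(w))},
\end{equation*}
after which taking the supremum over $w\in B(y,r)$ and letting $r\to0$ yields \eqref{equationLeibnitz}, exactly as in Proposition~\ref{propLeibnitz formula for slope}. The only ingredient beyond the earlier Leibniz computation is the combined use of the convexity of the common image (to place $\eta(y)$ back in $Im(\phi)=Im(\psi)$) together with the uniform ratio bound (2); the remainder is the triangle inequality, homogeneity of the norm, and the same limiting procedure. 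I expect the subtle point to be the correct reading of the convexity hypothesis, which is precisely what is needed to ensure $\eta(y)$ is a value of both $\phi$ and $\psi$ so that (2) is applicable.
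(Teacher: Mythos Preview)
Your overall strategy is exactly the paper's: bound the numerator by the triangle inequality, use convexity together with hypothesis~(1) to write the relevant value of $\eta$ as a value of both $\phi$ and $\psi$, and then apply hypothesis~(2) to trade the $\eta$-denominator for the $\phi$- and $\psi$-denominators at the cost of the factor~$\ell$. The one slip is in the choice of denominator. By Definition~\ref{def_ILS.2},
\[
Ils(\eta)(y)=\limsup_{w\to y}\frac{d(\eta(w),\eta(y))}{d(\eta(w),\pi^{-1}(y))},
\]
with the section evaluated at the \emph{running} point and the fibre taken over the \emph{fixed} point; you instead divide by $d(\eta(y),\pi^{-1}(w))$, so neither side of your final display is the quotient whose $\limsup$ equals an $Ils$. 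The paper's proof of this proposition (unlike that of Proposition~\ref{propLeibnitz formula for slope}, which you are mirroring) keeps the convention of the definition: with running variable $z\to y$ it divides by $d(\eta(z),\pi^{-1}(y))$, uses convexity and~(1) to write $\eta(z)=\phi(z_1)=\psi(z_2)$ for suitable $z$-dependent $z_1,z_2$, and then bounds $d(\phi(z),\pi^{-1}(y))/d(\phi(z_1),\pi^{-1}(y))\le\ell$ and $d(\psi(z),\pi^{-1}(y))/d(\psi(z_2),\pi^{-1}(y))\le\ell$ via~(2). If you swap the roles of $y$ and $w$ in your denominators and take your preimages $w',w''$ at the running point rather than at the fixed one, your argument becomes exactly the paper's.
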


 \begin{proof} 
In the similar way to Proposition \ref{propLeibnitz formula for slope}, we have that for any $y, z\in Y$
\begin{equation*}
\begin{aligned}
\|\eta (y)-\eta (z)\| & \leq t \|\phi (y)-\phi (z)\|+ (1-t)\|\psi (y)-\psi (z)\|.
\end{aligned}
\end{equation*}
Hence, by convexity of $X,$ $Im(\eta) =Im(\phi)=Im(\psi)$ and so dividing for  $d(\eta (z), \pi ^{-1} (y))$    we obtain
 \begin{equation*}
\begin{aligned}
\frac{d(\eta (y),\eta (z))}{d(\eta (z), \pi ^{-1} (y))}& \leq  t \frac{   d(\phi (y), \phi (z))}{d(\phi  (z), \pi ^{-1} (y))}  \frac{  d(\phi  (z), \pi ^{-1} (y)) }{d(\eta (z) , \pi ^{-1} (y))}   + 
(1-t) \frac{ d(\psi (y),\psi (z))}{d(\psi (z), \pi ^{-1} (y))}   \frac{  d(\psi (z), \pi ^{-1} (y)) }{d(\eta  (z), \pi ^{-1} (y))}  \\
& = t \frac{   d(\phi (y), \phi (z))}{d(\phi  (z), \pi ^{-1} (y))}  \frac{  d(\phi  (z), \pi ^{-1} (y)) }{d(\phi (z_1) , \pi ^{-1} (y))}   + 
(1-t)\frac{ d(\psi (y),\psi (z))}{d(\psi (z), \pi ^{-1} (y))}   \frac{  d(\psi (z), \pi ^{-1} (y)) }{d(\psi (z_2), \pi ^{-1} (y))}  \\
& \leq  t \ell \frac{   d(\phi (y), \phi (z))}{d(\phi  (z), \pi ^{-1} (y))}   + 
(1-t)\ell \frac{ d(\psi (y),\psi (z))}{d(\psi (z), \pi ^{-1} (y))}    \\
\end{aligned}
\end{equation*}
Now taking the supremum in $z\in B( y , r)$ on the left hand side and letting $r\to 0$ we get the thesis \eqref{equationLeibnitz}.
\end{proof}

 \begin{coroll}\label{corollCONVEXD}  
Let $X$ be a normed and  convex space, $Y$ be  a metric space. The following set is convex one: the set of all   intrinsically $L$-Lipschitz sections of $\pi$ such that
\begin{enumerate}
\item  $Im(\phi)=Im(\psi).$
\item it holds
\begin{equation*}
 \frac{  d(\phi  (z_1), \pi ^{-1} (y)) }{d(\phi (z_2) , \pi ^{-1} (y))} \leq \ell <\infty, \quad \forall z_1,z_2,y \in Y.
\end{equation*}
\end{enumerate}
\end{coroll}

 \begin{prop}[Product of the slope (stronger version)]\label{propslope.9apr.27}
Let $X$ be a normed space, $Y$ a metric space and let $\phi $ be an intrinsically Lipschitz section of $\pi$ bounded by $M$ such that
\begin{equation*}
d(\phi^2 (z), \pi ^{-1} (y)) \gtrsim d(\phi (z), \pi ^{-1} (y)) , \quad \forall y,z \in Y.
\end{equation*}
Then
\begin{equation}\label{equationLeibnitz.prod.27}
Ils (\phi ^2 )( y) \leq  2M Ils (\phi) ( y), \quad \forall y\in Y.
\end{equation}

\end{prop}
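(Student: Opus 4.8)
The plan is to mimic the proof of the corollary following Proposition~\ref{propslope.9apr}, but to run the estimate at an arbitrary base point $y\in Y$ instead of at a distinguished point; the uniform comparison of denominators assumed here is exactly what upgrades that pointwise statement to a global one, valid at every $y$.

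First I would record the algebraic estimate for the square. Write $M$ for the bound on $\phi$, so that $\|\phi(z)\|\le M$ for all $z$, and interpret $\phi^2$ as the pointwise square, as in the preceding corollary. Splitting with the triangle inequality exactly as in the proof of Proposition~\ref{propslope.9apr} (with $\psi=\phi$), for all $y,z\in Y$ one gets
\begin{equation*}
d(\phi^2(z),\phi^2(y)) \le \|\phi(z)\|\,d(\phi(z),\phi(y)) + \|\phi(y)\|\,d(\phi(z),\phi(y)) \le 2M\,d(\phi(z),\phi(y)).
\end{equation*}

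Next I would divide by $d(\phi^2(z),\pi^{-1}(y))$, which is the denominator appearing in $Ils(\phi^2)(y)$ when the base point is $y$ and the variable is $z\to y$. The standing hypothesis $d(\phi^2(z),\pi^{-1}(y))\gtrsim d(\phi(z),\pi^{-1}(y))$ then lets me replace this denominator by the one built from $\phi$, so that for every admissible $z$
\begin{equation*}
\frac{d(\phi^2(z),\phi^2(y))}{d(\phi^2(z),\pi^{-1}(y))} \le 2M\,\frac{d(\phi(z),\phi(y))}{d(\phi^2(z),\pi^{-1}(y))} \le 2M\,\frac{d(\phi(z),\phi(y))}{d(\phi(z),\pi^{-1}(y))}.
\end{equation*}
Taking the supremum over $z\in B(y,r)$ and letting $r\to0$ yields $Ils(\phi^2)(y)\le 2M\,Ils(\phi)(y)$, and since $y\in Y$ was arbitrary this is precisely \eqref{equationLeibnitz.prod.27}.

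I expect the only genuine content to be the denominator comparison. The slope of $\phi^2$ measures distance to $\pi^{-1}(y)$ from the point $\phi^2(z)$, which a priori is unrelated to the distance from $\phi(z)$ that controls $Ils(\phi)(y)$; the whole role of the hypothesis $d(\phi^2(z),\pi^{-1}(y))\gtrsim d(\phi(z),\pi^{-1}(y))$ is to bridge this gap \emph{uniformly} in $y$ and $z$. The boundedness $\sup|\phi|\le M<\infty$ is what keeps the constant $2M$ finite, while the absence of any common-point requirement (contrast the corollary after Proposition~\ref{propslope.9apr}) is exactly what lets the estimate hold at every $y$ rather than only at the distinguished point.
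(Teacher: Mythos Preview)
Your proof is correct and follows essentially the same route as the paper's own argument: establish the pointwise bound $d(\phi^2(y),\phi^2(z))\le 2M\,d(\phi(y),\phi(z))$ as in the corollary after Proposition~\ref{propslope.9apr}, divide by $d(\phi^2(z),\pi^{-1}(y))$, invoke the hypothesis to pass to $d(\phi(z),\pi^{-1}(y))$ in the denominator, and take the $\limsup$. Your additional commentary on the role of the denominator comparison is accurate and matches the paper's intent.
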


 \begin{proof} 
Fix $y\in Y.$ As Proposition \ref{propslope.9apr}, we get for any $z\in Y$ 
 \begin{equation*}
\begin{aligned}
\frac{d(\phi ^2 (y), \phi ^2(z)) }{d( \phi ^2(z), \pi ^{-1} (y))}& \leq  2M\frac{  d(\phi (y), \phi(z))  }{d(\phi  (z), \pi ^{-1} (y))},\\
\end{aligned}
\end{equation*}
Now taking the supremum in $z\in B(y, r)$ on the left hand side and letting $r\to 0$ we get the thesis \eqref{equationLeibnitz.prod.27}.
\end{proof}

 \bibliographystyle{alpha}
\bibliography{DDLD}

\newcommand{\etalchar}[1]{$^{#1}$}
\begin{thebibliography}{FSSC03b}

\bibitem[ABB19]{ABB}
Andrei Agrachev, Davide Barilari, and Ugo Boscain.
\newblock A comprehensive introduction to sub-{R}iemannian geometry.
\newblock {\em Cambridge Studies in Advanced Mathematics, Cambridge Univ.
  Press}, 181:762, 2019.

\bibitem[AK00]{AmbrosioKirchheimRect}
Luigi Ambrosio and Bernd Kirchheim.
\newblock Rectifiable sets in metric and {B}anach spaces.
\newblock {\em Math. Ann.}, 318(3):527--555, 2000.

\bibitem[BJL{\etalchar{+}}99]{MR1736929}
S.~Bates, W.~B. Johnson, J.~Lindenstrauss, D.~Preiss, and G.~Schechtman.
\newblock Affine approximation of {L}ipschitz functions and nonlinear
  quotients.
\newblock {\em Geom. Funct. Anal.}, 9(6):1092--1127, 1999.

\bibitem[BLU07]{BLU}
A.~Bonfiglioli, E.~Lanconelli, and F.~Uguzzoni.
\newblock {\em Stratified {L}ie groups and potential theory for their
  sub-{L}aplacians}.
\newblock Springer Monographs in Mathematics. Springer, Berlin, 2007.

\bibitem[Bre10]{B10}
H.~Brezis.
\newblock {\em Functional {A}nalysis, {S}obolev {S}paces and {P}artial
  {D}ifferential {E}quations}, volume~87.
\newblock Springer Science \& Business Media, 2010.

\bibitem[CDPT07]{CDPT}
Luca Capogna, Donatella Danielli, Scott~D. Pauls, and Jeremy~T. Tyson.
\newblock {\em An introduction to the {H}eisenberg group and the
  sub-{R}iemannian isoperimetric problem}, volume 259 of {\em Progress in
  Mathematics}.
\newblock Birkh\"{a}user Verlag, Basel, 2007.

\bibitem[Che99]{C99}
J.~Cheeger.
\newblock Differentiability of {L}ipschitz functions on metric measure spaces.
\newblock {\em Geom. Funct. Anal.}, 9(8):428--517, 1999.

\bibitem[DD22a]{D22.4}
Daniela Di~Donato.
\newblock Ahlfors- {D}avid regularity of intrinsically quasi-symmetric sections
  in metric spaces.
\newblock {\em preprint}, 2022.

\bibitem[DD22b]{D22.1}
Daniela Di~Donato.
\newblock Intrinsically {H}\"older sections in metric spaces.
\newblock {\em preprint}, 2022.

\bibitem[DD22c]{D22.2}
Daniela Di~Donato.
\newblock Intrinsically quasi-isometric sections in metric spaces.
\newblock {\em preprint}, 2022.

\bibitem[DDLD22]{DDLD21}
Daniela Di~Donato and Enrico Le~Donne.
\newblock Intrinsically {L}ipschitz sections and applications to metric groups.
\newblock {\em preprint}, 2022.

\bibitem[DM14]{DM}
S.~Di~Marino.
\newblock Recent advances in {BV} and {S}obolev spaces in metric measure
  spaces.
\newblock {\em PhD Thesis in Mathematics}, 2014.

\bibitem[FS16]{FS16}
Bruno Franchi and Raul~Paolo Serapioni.
\newblock Intrinsic {L}ipschitz graphs within {C}arnot groups.
\newblock {\em J. Geom. Anal.}, 26(3):1946--1994, 2016.

\bibitem[FSSC01]{FSSC}
B.~Franchi, R.~Serapioni, and F.~Serra~Cassano.
\newblock Rectifiability and perimeter in the {H}eisenberg group.
\newblock {\em Math. Ann.}, 321(3):479--531, 2001.

\bibitem[FSSC03a]{MR2032504}
B.~Franchi, R.~Serapioni, and F.~Serra~Cassano.
\newblock Regular hypersurfaces, intrinsic perimeter and implicit function
  theorem in {C}arnot groups.
\newblock {\em Comm. Anal. Geom.}, 11(5):909--944, 2003.

\bibitem[FSSC03b]{FSSC03}
Bruno Franchi, Raul Serapioni, and Francesco Serra~Cassano.
\newblock On the structure of finite perimeter sets in step 2 {C}arnot groups.
\newblock {\em The Journal of Geometric Analysis}, 13(3):421--466, 2003.

\bibitem[Meg98]{M98}
R.E. Megginson.
\newblock {\em An Introduction to {B}anach {S}pace {T}heory}, volume~87.
\newblock Graduate Texts in Mathematics, vol 183. Springer, New York, NY.,
  1998.

\bibitem[SC16]{SC16}
Francesco Serra~Cassano.
\newblock Some topics of geometric measure theory in {C}arnot groups.
\newblock In {\em Geometry, analysis and dynamics on sub-{R}iemannian
  manifolds. {V}ol. 1}, EMS Ser. Lect. Math., pages 1--121. Eur. Math. Soc.,
  Z\"urich, 2016.

\bibitem[VN88]{Berestovski}
Berestovskii Valerii~Nikolaevich.
\newblock Homogeneous manifolds with intrinsic metric.
\newblock {\em Sib Math J}, I(29):887--897, 1988.

\end{thebibliography}

\end{document}